\title{\vspace{-0.3cm}Troisi\`eme groupe de cohomologie non ramifi\'ee d'un solide cubique sur un corps de fonctions d'une variable}
\author{\vspace{0cm} Jean-Louis Colliot-Th\'el\`ene et Alena Pirutka}
\institution{Universit\'e Paris-Sud, CNRS, Paris-Saclay, Math\'ematiques, B\^atiment  307, 91405 Orsay Cedex, France}\\
\email{jlct@math.u-psud.fr}}\\
\institution{Courant Institute, New York University, 251 Mercer Street,  New York,  NY 10012, USA \\
National Research University Higher School of Economics, Russian Federation}\\
\email{pirutka@cims.nyu.edu}}
\date{\vspace{-5ex}} 
\journal{\'Epijournal de G\'eom\'etrie Alg\'ebrique} 
\newcommand\Ab{{\rm  Ab}}
 \newcommand\Gal{{\rm Gal}}
\newcommand{\PP}{{\mathbb P}}
\newcommand{\Z}{{\mathbb Z}}
\newcommand{\Q}{{\mathbb Q}}
\newcommand{\C}{{\mathbb C}}
\newcommand{\oi}{\hskip1mm {\buildrel \simeq \over \rightarrow} \hskip1mm}
\newcommand{\ovX}{{\overline X}}
\newcommand{\Br}{{\operatorname{Br  }}}
\newcommand{\Prym}{\operatorname{Prym}}
\newcommand{\Pic}{\operatorname{Pic}}
\newcommand{\cod}{\operatorname{cd}}
\newtheorem{theo}{Th\'{e}or\`{e}me}[section]
\newtheorem{prop}[theo]{Proposition}
\newtheorem{lem}[theo]{Lemme}
\newtheorem{rema}[theo]{Remarque}
\begin{document}


\maketitle



\begin{prelims}

\vspace{-0.55cm}

\def\abstractname{R\'esum\'e}
\abstract{En combinant une m\'ethode de C. Voisin avec la descente galoisienne sur le groupe de Chow en codimension $2$, nous montrons que le
troisi\`eme groupe de cohomologie non ramifi\'ee d'un solide cubique lisse d\'efini sur le corps des fonctions d'une courbe complexe est nul.  Ceci implique que  la conjecture de Hodge enti\`ere pour les classes de degr\'e 4 vaut pour les vari\'et\'es projectives et  lisses de dimension 4 fibr\'ees en solides cubiques au-dessus d'une courbe, sans restriction sur les  fibres singuli\`eres.}

\motscles{Groupes de Chow ; cycles de codimension 2 ; cohomologie non ramifi\'ee ;
famille d'hypersurfaces cubiques ; jacobienne interm\'ediaire ; conjecture
de Hodge enti\`ere.}

\MSCclassfr{14C25; 14H40; 14C35; 14D06; 14Mxx; 14C30}

\vspace{0.15cm}

\languagesection{English}{%

\vspace{-0.05cm}
\textbf{Title. Third unramified cohomology group of a cubic threefold over a function field in one variable} \commentskip \textbf{Abstract.} We prove that the third unramified cohomology group of a smooth cubic threefold over the function field of a complex curve vanishes. For this, we combine a method of C. Voisin with Galois descent on the codimension $2$ Chow group. As a corollary, we show that the integral Hodge conjecture holds for degree $4$ classes on smooth projective fourfolds equipped with a fibration over a curve, the generic fibre  of which is a smooth cubic threefold, with arbitrary singularities on the special fibres.

\commentskip\keywords{Chow groups; codimension 2 cycles; unramified cohomology; family of cubic hypersurfaces; intermediate 
jacobian; integral Hodge conjecture.}
}

\end{prelims}


\newpage

\setcounter{tocdepth}{1} \tableofcontents

 \section{Introduction}

 Soit $W$ une vari\'et\'e projective et lisse sur $\C$, le corps des complexes.
Pour tout entier $i\geq 1$, on dispose d'applications cycles 
$$ cl_{i} : CH^{i}(W) \to Hdg^{2i}(W,\Z(i)),$$
d\'efinies sur le groupe de Chow des cycles de codimension $i$
modulo l'\'equivalence rationnelle, \`a valeurs dans le groupe
\begin{equation*} 
 Hdg^{2i}(W,\Z(i)) \subset H^{2i}_{Betti}(W,\Z(i))
 \end{equation*}
image r\'eciproque du groupe des  classes de Hodge dans 
$H^{2i}_{Betti}(W,\Q(i))$.
L'application $cl_{1}$ est surjective.
Pour $i\geq 2$, la conjecture de Hodge pr\'edit que
le conoyau $Z^{2i}(W)$ de $cl_{i}$ est de torsion.

Dans \cite{CTV12}, on a montr\'e que si le groupe de Chow $CH_{0}(W)$ des z\'ero-cycles sur $W$
est ``support\'e'' sur une surface, alors le groupe  $Z^{4}(W)$ est fini et isomorphe
au groupe 
\begin{equation*}
H^3_{nr}(\C(W)/\C, \Q/\Z(2)) \subset H^3(\C(W),\Q/\Z(2))
\end{equation*}
 form\'e des classes
non ramifi\'ees dans le troisi\`eme groupe de cohomologie galoisienne 
du corps des fonctions de $W$, \`a valeurs dans $\Q/\Z(2)$.
Les groupes $Z^{4}(W)$ et $H^3_{nr}(\C(W)/\C,\Q/\Z(2))$  sont des invariants birationnels.

Ceci a permis de donner des exemples de $W$ rationnellement connexes
de toute dimension $d\geq 6$ avec $Z^{4}(W)\neq 0$.
Pour $d=4,5$ et $W$ rationnellement connexe,
 c'est une question ouverte si  l'on a $Z^{4}(W)= 0$\footnote{Depuis la soumission de notre article, S. Schreieder a  construit
de telles vari\'et\'es $W$ en dimension $d=4, 5$ avec $Z^4(W)\neq 0$.}.
 C'est connu pour $W$ une hypersurface cubique lisse dans $\mathbb P^5_{\C}$ \cite{Vo07}.
 
 Plus g\'en\'eralement, C.~Voisin \cite{Vo13} a montr\'e que  $Z^{4}(W)= 0$
pour toute $W$ de dimension $4$ munie d'un morphisme 
$f: W \to \Gamma$ vers une courbe complexe, projective et lisse $\Gamma$, de fibre g\'en\'erique $X/{\C(\Gamma)}$ 
 une hypersurface cubique dans $\mathbb P^4_{\C(\Gamma)}$, sous l'hypoth\`ese que les
seules singularit\'es des fibres de $f$ sont des singularit\'es quadratiques ordinaires.

Nous montrons ici que l'\'enonc\'e vaut sans cette restriction sur les fibres singuli\`eres.

\begin{theo}\label{ThmHIF}
Soit \,$\Gamma$ \,une \,courbe \,complexe, \,projective, \,lisse, \,connexe, \,et \,soit \,$f: \mathcal X\to \Gamma$ \,un \,morphisme \,propre et surjectif, de dimension relative $3$,  avec $\mathcal X$ lisse 
 connexe, 
 \`a fibre g\'en\'erique un solide cubique lisse.  
 Alors $H^3_{nr}(\C(\mathcal X)/\C, \Q/ \Z(2))=0$
 et la conjecture de Hodge enti\`ere vaut pour les cycles de codimension $2$ sur  $\mathcal X$.
\end{theo}

Le groupe $H^3_{nr}(\C(\mathcal X)/\C, \Q/\Z(2))$ est form\'e des \'el\'ements de $H^3(\C(\mathcal X),\Q/\Z(2))$ non ramifi\'es par rapport  \`a toutes les valuations. C'est donc un sous-groupe du groupe $H^3_{nr}(\C(\mathcal X)/\C(\Gamma), \Q/\Z(2))$ form\'e des classes non ramifi\'ees par rapport aux valuations triviales sur $\C(\Gamma)$. Nous montrons que ce groupe est  d\'ej\`a nul. Plus pr\'ecis\'ement, 
 soient
  $X$  la fibre g\'en\'erique de $f$, $k=\C(\Gamma)$,  $\bar k$ une cl\^oture alg\'ebrique de $k$, $G={\rm Gal}(\bar k/k)$ le groupe de Galois absolu et $\ovX=X\times_k \bar k$.
Utilisant des r\'esultats de $K$-th\'eorie alg\'ebrique et de cohomologie motivique \`a coefficients $\Z(2)$ \cite{CT15, Ka96}
on se ram\`ene  (voir le \S \ref{section-4})
 \`a \'etablir que l'application 
\begin{equation*}
 CH^2(X) \to CH^2(\ovX)^G
 \end{equation*}
est surjective.
Autrement dit, il faut voir que toute classe dans $CH^2(\ovX)^G$
 est l'image de la classe d'une courbe (d\'efinie sur $k$) trac\'ee sur $X$.

\begin{theo}\label{thmh3nr}
Soit $k$   un corps de fonctions d'une variable sur $\C$ et soit $X\subset \PP^4_k$ un solide cubique lisse.   Soient $\bar k$ une cl\^oture alg\'ebrique de $k$, $G={\rm Gal}(\bar k/k)$ le groupe de Galois absolu et $\ovX=X\times_k \bar k$. Alors :
\begin{itemize}
\item[\rm (i)] L'application naturelle $CH^2(X)\to CH^2(\ovX)^G$ est surjective;
\item[\rm (ii)] $H^3_{nr}(k(X)/k, \Q/\Z(2))=0$.
\end{itemize}
\end{theo}

 Le groupe ab\'elien $CH^2(\ovX)$ est  bien compris. C'est une extension de
 $\Z$ par le groupe des $\overline{k}$-points de la jacobienne interm\'ediaire de $\ovX$. 
Plus pr\'ecis\'ement, puisque $\ovX$ est rationnellement connexe, l'application degr\'e $CH^0(\ovX)\to \Z$ est un isomorphisme. Un argument de d\'ecomposition de la diagonale (\cite{BS83} Thm. 1(ii), cela utilise aussi le th\'eor\`eme de Merkurjev-Suslin \cite{MS}) montre que \'equivalence alg\'ebrique et \'equivalence homologique
sur les cycles de codimension $2$ sur $\ovX$ co\"incident. 
 On a ainsi une suite exacte
 \begin{equation*}
 0 \to  CH^2(\ovX)_{alg} \to CH^2(\ovX) \to \Z \to 0,
 \end{equation*}
o\`u le  groupe $CH^2(\ovX)_{alg}$ des classes de cycles alg\'ebriquement \'equivalents \`a z\'ero
est une incarnation des $\bar k$-points de la jacobienne interm\'ediaire, qui est a priori d\'efinie sur $\bar k$,
par exemple, apr\`es  le choix d'une $\bar k$-droite sur $\ovX$, comme la vari\'et\'e de Prym attach\'ee \`a la fibration
en coniques d\'etermin\'ee par cette droite 
(voir \cite[Th\'eor\`eme 2.1(iii), Th\'eor\`eme 3.1]{Beau77}). 
Pour comprendre l'action galoisienne, nous utilisons une autre incarnation : d'apr\`es Murre \cite{Mu74}, le groupe $CH^2(\ovX)_{alg}$
s'identifie au groupe des $\bar k$-points de la vari\'et\'e de Picard ${\bf Pic}^0_{S/k}$, not\'ee ici $J/k$, de la surface de Fano $S/k$  des droites trac\'ees sur $X$.   

Le groupe $CH^2(\ovX)$  est la r\'eunion disjointe des ensembles $CH^2(\ovX)_{d}$ form\'es des classes de cycles
dont l'intersection avec un hyperplan est de degr\'e 
$d \in \Z$. 
On a 
$CH^2(\ovX)_{d}=J_{d}({\bar k})$,
o\`u $J_{d}/k$ est un espace principal homog\`ene de la $k$-vari\'et\'e ab\'elienne $J$.
La section \ref{sJd} est consacr\'ee \`a la construction de ces espaces.

Pour prouver que, pour tout entier $d$, l'application 
\begin{equation}\label{surjectionvoulu}
 CH^2(X)_{d} \to CH^2(\ovX)_{d}^G
 \end{equation}
est surjective,
par un argument simple \cite[p.~164]{Vo13}, il suffit de le faire pour $d=5$ et $d=6$.

 Pour $d=5$ et $d=6$, sur le corps des complexes et $X \subset \mathbb P^4_{\C}$ une cubique {\it g\'en\'erale},
 des arguments de g\'eom\'etrie projective \'elabor\'es (Iliev, Markushevich, 
 Tikhomirov \cite{IM00, MT01} pour $d=5$, Voisin \cite{Vo13} pour $d=6$) ont montr\'e l'existence de d\'esin\-gu\-la\-risations projectives et lisses $\tilde{M}_{d,1}$ de certaines
 composantes du sch\'ema de Hilbert de  courbes de genre $1$ et de degr\'e $d$ trac\'ees sur $X$
 et de morphismes d'Abel-Jacobi  $\tilde{M}_{d,1} \to J_{d}$ qui sont dominants et   dont la fibre g\'en\'erique
 est une vari\'et\'e g\'eom\'etriquement rationnellement connexe. Dans la section \ref{sectionEM} on \'etend cette construction \`a la famille universelle de toutes les hypersurfaces cubiques lisses dans $\mathbb P^4_{\C}$.
 
 Pour {\it toute} hypersurface cubique lisse dans $ \mathbb P^4_{\C(\Gamma)}$ la surjection voulue (\ref{surjectionvoulu}) pour $d=5,6$ s'\'etablit par un argument de sp\'ecialisation (voir la Proposition \ref{pointsfibres}) pour les familles non-n\'ecessairement lisses de vari\'et\'es rationnellement connexes (\cite{GHMS05, HX09}) combin\'e au th\'eor\`eme de Graber, Harris et Starr \cite{GHS03}. La d\'emonstration des th\'eor\`emes \ref{ThmHIF} et \ref{thmh3nr} est faite dans la section \ref{preuvesT}.
 
\bigskip

{\bf Remerciements.} Ce travail a commenc\'e \`a Vienne pendant le programme 
``Advances in Birational Geometry 2017"; nous remercions l'Institut  Schr\"odinger pour son hospitalit\'e et Ludmil Katzarkov pour avoir lanc\'e ce programme.   Le  deuxi\`eme auteur remercie le Laboratoire de  Sym\'etrie Miroir  NRU HSE, RF Government grant, ag. 14.641.31.0001 et le NSF grant 1601680 pour leurs soutiens financiers. Le premier auteur remercie l'Institut Courant, NYU, pour son hospitalit\'e.

 \section{Repr\'esentants alg\'ebriques pour $CH^2_{alg}$}\label{sJd}

\subsection{Sur un corps alg\'ebriquement clos} \label{sac}

Soit $k$ un corps alg\'ebriquement clos et soit $X$ une vari\'et\'e alg\'ebrique
connexe,
 projective et lisse d\'efinie sur $k$.  Soit $A$ une vari\'et\'e ab\'elienne sur $k$. Rappelons \cite{Mu72, Mu73, Mu74}, \cite[Def. 1.6.1]{Mu85}, \cite[Section 3.2]{Beau77}, \cite{ACMV17} 
 qu'un 
 homomorphisme de groupes ab\'eliens
$\phi:CH^2(X)_{alg}\to A(k)$
est dit {\it r\'egulier} si pour toute donn\'ee d'une vari\'et\'e lisse 
connexe 
$T$ sur $k$ avec un point $t_0\in T(k)$ et un cycle $Z\in CH^2(X\times T)$  l'application compos\'ee

\begin{equation*}
T(k)\to CH^2(X)_{alg}\to A(k),\; t\mapsto \phi(Z_t-Z_{t_0})
\end{equation*}
est induite par un morphisme de vari\'et\'es alg\'ebriques $T\to A$ (voir \cite[Section 10.1]{Fulton} pour la d\'efinition du cycle $Z_t$). Un {\it repr\'esentant} alg\'ebrique  
pour  $CH^2(X)_{alg}$ 
est une vari\'et\'e ab\'elienne $\Ab^2(X)$
 munie d'un homomorphisme 
r\'egulier $\phi_{Ab}:CH^2(X)_{alg}\to \Ab^2(X)(k)$ qui v\'erifie la propri\'et\'e universelle suivante : pour toute vari\'et\'e ab\'elienne $A$ et pour tout  morphisme r\'egulier $\phi:CH^2(X)_{alg}\to A(k)$ il existe un unique morphisme de vari\'et\'es ab\'eliennes
 $\psi: \Ab^2(X)\to A$ tel que le diagramme suivant
$$\xymatrix{CH^2(X)_{alg}\ar[rr]^{\phi_{Ab}}\ar[rd]^{\phi}& & \Ab^2(X)(k)\ar[ld]^{\psi}\\
&A(k)&
}
$$
soit commutatif.
 Murre \cite[Thm. 1.9]{Mu85},
 en utilisant des r\'esultats de H. Saito, Bloch et Ogus, et Merkurjev et Suslin,
  a \'etabli
  qu'un repr\'esentant alg\'ebrique pour $CH^2(X)_{alg}$
  existe pour toute vari\'et\'e alg\'ebrique 
 connexe,
 projective et lisse sur $k$ (voir \cite[Thm. 5]{Mu74} pour les solides cubiques).
 D'apr\`es la d\'efinition, il est donc unique \`a un unique isomorphisme pr\`es.

 Dans le cas o\`u $X$ est un solide cubique lisse, on dispose d'une description explicite du repr\'esentant alg\'ebrique pour $CH^2(X)_{alg}$.  Supposons que $k$ est un corps  alg\'ebriquement clos de caract\'eristique diff\'erente de $2$
 et soit  $X\subset \PP^4_{k}$ un solide cubique lisse. 
Murre \cite{Mu72, Mu73, Mu74} (voir aussi  
Beauville \cite[Exemple 1.4.1 et Prop. 3.3]{Beau77})
exprime un repr\'esentant alg\'ebrique pour 
$CH^2(X)_{alg}$ 
en tant  
que  vari\'et\'e de Prym.
 Plus pr\'ecis\'ement, soit  $f:X'\to X$  l'\'eclatement de $X$ le long d'une droite $L$
suffisamment g\'en\'erale \cite[Prop. (1.25)]{Mu72}. On a alors $CH^2(X)_{alg}\simeq CH^2(X')_{alg}$.
En effet, si $E\subset X'$ est le diviseur exceptionnel, on a que $E\simeq \mathbb P^1_k\times \mathbb P^1_k$, car pour $L$ g\'en\'erale $N_{L/X}\simeq \mathcal O_L\oplus \mathcal O_L$ \cite[Prop. 6.19]{CG72}, et $CH^1(E)\simeq \mathbb Z\oplus \mathbb Z$. Soit $F$ la fibre de la restriction $f:E\to L=\mathbb P^1_k$. La formule d'\'eclatement \cite[Prop. 6.7(e)]{Fulton} donne alors un isomorphisme $ CH^2(X)\oplus \mathbb Z\simeq CH^2(X')$ o\`u l'inclusion du deuxi\`eme facteur  est donn\'ee par $n\mapsto n[F]$, d'o\`u l'isomorphisme $CH^2(X)_{alg}\simeq CH^2(X')_{alg}$.

Le solide  $X'$ poss\`ede une structure de fibr\'e en coniques $X' \to \PP^2$  ordinaire : la courbe de ramification $C \subset \PP^2$ est lisse (voir \cite[Prop. 1.22 et Section 5.1]{Mu72}).
 
 On a un rev\^{e}tement double \'etale $C' \to C$ avec $C'$ lisse connexe
 associ\'e aux syst\`emes de g\'en\'eratrices des fibres d\'eg\'en\'er\'ees.
 On d\'efinit la vari\'et\'e de Prym $\Prym(C'/C)$  du rev\^{e}tement connexe $C'/C$
 (voir \cite{Beau77}). C'est une vari\'et\'e ab\'elienne principalement polaris\'ee.
 On construit un isomorphisme de groupes ab\'eliens \cite[Thm. 3.1]{Beau77} :
 $$\Phi : \Prym(C'/C)(k) \oi CH^2(X')_{alg}.$$
 L'application inverse
 $$ \Phi^{-1} : CH^2(X')_{alg} \to \Prym(C'/C)(k)$$
 fait de $\Prym(C'/C)$ un repr\'esentant alg\'ebrique de $CH^2(X')_{alg}$ \cite [Prop. 3.3]{Beau77}.
 
 \subsection{Sur un corps de caract\'eristique z\'ero}

 Soit 
 $k$ un corps quelconque de caract\'eristique z\'ero,  $\bar k$ une cl\^oture alg\'ebrique de $k$
  et $G=\Gal(\bar k/k)$.  Soient $X\subset \PP^4_{k}$ un solide cubique lisse et $\ovX=X\times_k \bar k$. 
Dans la suite de ce texte nous avons besoin d'une autre description du repr\'esentant alg\'ebrique pour $CH^2(\ovX)_{alg}$.

Soit  $S$ le $k$-sch\'ema qui param\`etre les droites contenues dans $X$. C'est une 
$k$-surface
projective lisse 
g\'eom\'etriquement connexe, 
appel\'ee  {\it surface de Fano}  
 du solide cubique lisse
  $X$ \cite[1.12]{AK77}.  On note $\bar S=S\times_k \bar k$.
Soient $V\subset S\times X$
 \begin{equation}\label{vincidence}
V=\{(L,x), x\in L\} 
 \end{equation}
  la vari\'et\'e d'incidence associ\'ee et $p:V\to S, q:V\to X$ les deux projections.
  Soit  ${\bf Pic}_{S/k}$   le  
$k$-sch\'ema 
de Picard de $S$ : ce sch\'ema repr\'esente le faisceau  $\Pic_{S/k, (\acute{e}t)}$ associ\'e au foncteur $\Pic_{S/k}$ d\'efini par 
\begin{equation*}
\Pic_{S/k}(T)=\Pic(S\times_k T)/\Pic(T).
\end{equation*}
 Pour tout corps $K\supset k$  on dispose de l'application
 \begin{equation}\label{AJAlg}
p_*q^*: CH^2(X_K)\to {\bf Pic}_{S/k}(K)
 \end{equation}
induite par la correspondance d'incidence, o\`u   l'application $q^*$ est d\'efinie dans \cite[6.6]{Fulton} pour tout morphisme localement d'intersection compl\`ete, en particulier, pour tout morphisme entre des sch\'emas lisses (voir \cite[Ex. 6.3.18]{Liu}).  

L'application (\ref{AJAlg}) induit un homomorphisme $G$-\'equivariant
\begin{equation*}
p_*q^*: CH^2(\ovX) \to {\bf Pic}_{S/k}(\bar k)
\end{equation*}

Soit  $J:={\bf Pic}^0_{S/k}$ la composante connexe 
de l'identit\'e du sch\'ema  
${\bf Pic}_{S/k}$.  Par restriction,  on obtient   l'application $G$-\'equivariante
$$p_*q^*: CH^2(\ovX)_{alg}\to J(\bar k)={\bf Pic}^0_{S/k}(\bar k).$$

On dispose aussi de l'accouplement d'intersection
\begin{equation*}
CH^2(\ovX)  \times CH^1(\ovX) \to \Z
\end{equation*} qui
est $G$-\'equivariant. Le groupe $ CH^1(\ovX) = \Pic (\ovX) $
est \'egal \`a $\Z$, engendr\'e par la classe d'une section hyperplane
 \cite[Corollaire XII.3.7]{SGA2}.

\begin{theo}\label{ReprAlgd}
\begin{itemize}
\item[\rm (i)] (Murre) L'homomorphisme induit
\begin{equation}\label{AJbar}
CH^2(\ovX)_{alg} \to {\bf Pic}^0_{S/k}(\bar k)
\end{equation}
est  un isomorphisme de modules galoisiens.
 Il  fait de la $\bar k$-vari\'et\'e ab\'elienne   ${\bf Pic}_{S/k}^0\times_{k}\bar k$
  un repr\'esentant alg\'ebrique de $CH^2(\ovX)_{alg} $.  
\item[\rm (ii)] L'\'equivalence alg\'ebrique et l'\'equivalence num\'erique
co\"{\i}ncident sur $CH^2(\ovX)$;  elles co\"{\i}ncident aussi
avec l'\'equivalence homologique enti\`ere si $k=\mathbb C$.
On a une suite exacte de modules galoisiens
\begin{equation}\label{CHalg}
0 \to CH^2(\ovX)_{alg} \to CH^2(\ovX) \to \Z \to 0,
\end{equation}
o\`u la fl\`eche $CH^2(\ovX) \to \Z$ est donn\'ee par l'intersection
avec un hyperplan dans $\mathbb P^4_{\bar k}$.
\item[\rm (iii)] Pour tout 
nombre entier $d$ il existe une $k$-vari\'et\'e $J_{d}=J_d(X)$ qui est
un espace principal homog\`ene sous $J=J_{0}={\bf Pic}^0_{S/k}$ tel que $J_{d}(\bar k)$
s'identifie \`a l'image r\'eciproque de $d \in \Z$ via l'application $ CH^2(\ovX) \to \Z$ dans la suite (\ref{CHalg}) ci-dessus.
\item[\rm (iv)] Soit $T$ une $k$-vari\'et\'e lisse connexe et $Z \subset X \times_{k}T$
un ferm\'e plat sur $T$ dont les fibres sont des courbes de degr\'e $d$
sur $X$. L'application induite  $T(\bar k) \to CH^2(\ovX)$ 
est elle-m\^eme induite par un $k$-morphisme  $T \to  J_{d}$.
\end{itemize}  
\end{theo}
\begin{proof}
Pour (i) commen\c{c}ons par noter que l'homomorphisme (\ref{AJbar}) est r\'egulier : pour toute vari\'et\'e lisse connexe $T$ 
sur $ \bar k$,  
tout
point $t_0\in T(\bar k)$ et 
tout $Z\in CH^2(\ovX\times T)$, on dispose d'une famille de diviseurs  $p_*q^*Z\in CH^1(\bar S\times T)$. On a donc que l'application 
\begin{equation}\label{fcycles}
T(\bar k) \to {\bf Pic}^0_{S/k}(\bar k), \, t\mapsto p_*q^*(Z_t-Z_{t_0})
\end{equation}
 est 
 induite par
  un morphisme de vari\'et\'es alg\'ebriques $T\to {\bf Pic}^0_{S/k}$, car le sch\'ema ${\bf Pic}_{S/k}$ repr\'esente le faisceau  $\Pic_{S/k, (\acute{e}t)}$ associ\'e au foncteur $\Pic_{S/k}$ \cite{Kl05}.
  
  Soit $\Prym(C'/C)$ la vari\'et\'e de Prym associ\'ee \`a $\ovX$ comme dans la section \ref{sac} (apr\`es avoir choisi une droite g\'en\'erale contenue dans $\ovX$).  
Par la propri\'et\'e universelle, on a donc un 
 homomorphisme de vari\'et\'es ab\'eliennes
  $\Prym(C'/C)\to {\bf Pic}^0_{S/k}\times_{k}\bar k$, qui 
  induit
  un diagramme commutatif
$$\xymatrix{CH^2(\ovX)_{alg}\ar[rr]\ar[rd]_{p_*q^*}& & \Prym(C'/C)(\bar k)\ar[ld]\\
&{\bf Pic}^0_{S/k}(\bar k)&
}
$$
 D'apr\`es Murre \cite[Thm. 8]{Mu74}, l'application $\Prym(C'/C)\to {\bf Pic}^0_{S/k}\times_{k}\bar k$ est un isomorphisme.

Pour (ii), supposons d'abord $k=\mathbb C$.  La vari\'et\'e $X$ satisfait $CH_{0} (X_{\Omega})=\Z$ 
pour tout surcorps alg\'ebriquement clos $\Omega$ de $k$. D'apr\`es le th\'eor\`eme de Bloch et Srinivas \cite[Thm 1.2]{BS83}, qui utilise  le th\'eor\`eme de Merkurjev-Suslin, ceci implique que l'\'equivalence alg\'ebrique et l'\'equivalence homologique enti\`ere  co\"incident sur $CH^2(X)$.  L'\'equivalence homologique rationnelle et l'\'equivalence num\'erique co\"incident pour les $1$-cycles sur toute vari\'et\'e complexe connexe, projective et lisse (voir \cite[Prop. 1.1]{CTS13}). Pour le solide cubique $X$, la conjecture de Hodge enti\`ere vaut car $H^4(X, \mathbb Z)\simeq \Z$, engendr\'e par la classe d'une droite. En conclusion, les \'equivalences alg\'ebrique, num\'erique et homologique (enti\`ere et rationnelle) co\"incident sur $CH^2(X)$ et on a donc bien une suite exacte    
\begin{equation*}
0 \to CH^2(\ovX)_{alg} \to CH^2(\ovX) \to \Z \to 0
\end{equation*}
comme dans l'\'enonc\'e.
Dans le cas g\'en\'eral, fixons  un plongement $\bar k\subset \C$. Soit $\alpha\in CH^2(\ovX)$ une classe
num\'eriquement \'equivalente \`a z\'ero. On a le diagramme commutatif suivant
$$\xymatrix{CH^2(\ovX)\times CH^1(\ovX)\ar[r]\ar[d]&\Z\ar@{=}[d]\\
CH^2(X_{\C})\times CH^1(X_{\C})\ar[r]&\Z
}
$$
o\`u les fl\`eches horizontales sont induites par le produit d'intersection.
Puisque $CH^1(\ovX)\simeq CH^1(X_{\mathbb C})\simeq \Z$, on
 d\'eduit que l'image  $\alpha_{\C}$ de $\alpha$ dans $CH^2(X_{\C})$ est num\'eriquement, donc alg\'ebriquement, \'equivalente \`a $0$. Ainsi la classe $p_*q^*\alpha_{\C}\in{\bf Pic}_{S/k}(\C)$ est dans  ${\bf Pic}^0_{S/k}(\C)$. Or cette classe provient de ${\bf Pic}_{S/k}(\bar k)$. On a donc $p_*q^*\alpha \in{\bf Pic}^0_{S/k}(\bar k)$ et $\alpha$ est alg\'ebriquement \'equivalente \`a z\'ero. La suite exacte (\ref{CHalg}) en d\'ecoule.
 
Pour (iii), on utilise que l'extension (\ref{CHalg}) induit une suite exacte
\begin{equation*}
0\to CH^2(\ovX)_{alg}^G\to CH^2(\ovX)^G\to \Z\stackrel{\delta}{\to} H^1(k,CH^2(\ovX)_{alg}),
\end{equation*}
d'o\`u une classe de cohomologie $\delta(1)$  dans $H^1(k,CH^2(\ovX)_{alg} )= H^1(k, J(\bar k))$
et donc un espace principal homog\`ene $J_{1}$ de $J={\bf Pic}^0_{S/k}$.
  Pour $d \in \Z$ on dispose d'un espace
principal homog\`ene $J_d$ de classe $\delta(d)$ :  l'ensemble des $\bar k$-points de la $k$-vari\'et\'e $J_{d}$ est la classe \`a gauche de $d\in \Z$, i.e. l'image r\'eciproque de $d$ dans $ CH^2(\ovX)$, avec l'action du groupe de Galois $G$ induite par celle sur $ CH^2(\ovX)$. L'application $p_*q^*$  induit une application $G$-\'equivariante $J_d(\bar k)\hookrightarrow {\bf Pic}_{S/k}(\bar k)$, via le diagramme suivant   
\begin{equation}\label{diagJd}
\xymatrix{0\ar[r]&CH^2(\ovX)_{alg}\ar[r]\ar_{p_*q^*}^{\simeq}[d]&CH^2(\ovX)\ar[r]\ar[d]_{p_*q^*} &\Z\ar@{^{(}->}_{p_*q^*}[d]\ar[r]&0\\
0\ar[r]& {\bf Pic}^0_{S/k}(\bar k)\ar[r]& {\bf Pic}_{S/k}(\bar k)\ar[r] &{\bf NS}_{S/k}(\bar k)\ar[r]& 0.
}
\end{equation}
On peut donc aussi voir $J_d$ comme une composante du sch\'ema ${\bf Pic}_{S/k}$.

Soit  $T$ une $k$-vari\'et\'e lisse connexe et $Z \subset X \times_{k}T$ comme dans (iv). Soit $[Z]\in CH^2(X\times T)$ la classe de $[Z]$. La classe $p_*q^*[Z]\in CH^1( S\times_k T)$ induit un morphisme alg\'ebrique 
\begin{equation}\label{TJalg}
T\to {\bf Pic}_{S/k}.
\end{equation}
 Puisque les fibres de $Z$ au-dessus de $T$ sont des courbes de degr\'e $d$, l'image de $T(\bar k)$ est contenue dans $J_d(\bar k)\subset {\bf Pic}_{S/k}(\bar k)$. L'application (\ref{TJalg}) ci-dessus se factorise donc par une application alg\'ebrique $T\to J_d$, comme affirm\'e dans l'\'enonc\'e.
\qed
\end{proof}

\begin{rema}\label{classe}{\rm 
Dans le cas o\`u $k=\mathbb C$, la fl\`eche $CH^2(X)\to \Z$ dans l'extension (\ref{CHalg}) s'identifie \`a l'application classe de cycle $CH^2(X)\to H^4(X, \Z)$, o\`u $H^4(X, \Z)\simeq \Z$ est engendr\'e par la classe d'une droite. Via le diagramme (\ref{diagJd}) la vari\'et\'e $J_d$ correspond \`a une composante de  ${\bf Pic}_{S/k}$ d'\'el\'ements de classe $p_*q^*d$.}
\end{rema}

 \section{Espaces de modules pour une famille de cubiques}\label{sectionEM}
 
\subsection{Espaces de modules des courbes de genre $1$} Soit $k=\mathbb C$ et soit $X\subset \mathbb P^4_{\mathbb C}$ un solide cubique lisse.

Soit $C\subset \mathbb P^4_{\mathbb C}$ une courbe connexe, projective et lisse. Rappelons que $C$ est dite {\it non-d\'eg\'en\'er\'ee} si l'application
\begin{equation}\label{anormale}
H^0(\mathbb P^4_{\mathbb C}, \mathcal O_{\mathbb P^4_{\mathbb C}}(1))\to H^0(C, \mathcal O_{C}(1)),
\end{equation} 
induite par la suite exacte de faisceaux
\begin{equation*}
0\to I_C\to \mathcal O_{\mathbb P^4_{\mathbb C}}\to \mathcal O_{C}\to 0
\end{equation*}
est injective.
On dit que $C$ est {\it (lin\'eairement) normale} si l'application (\ref{anormale}) est surjective. On dit que $C$ est {\it projectivement normale} si l'application  $H^0(\mathbb P^4_{\mathbb C}, \mathcal O_{\mathbb P^4_{\mathbb C}}(m))\to H^0(C, \mathcal O_{C}(m))$ est surjective pour tout $m\geq 1$. Notons que la dimension de l'espace $H^0(\mathbb P^4_{\mathbb C}, \mathcal O_{\mathbb P^4_{\mathbb C}}(1))$ vaut $5$. Si $C$ est de genre $1$ et de degr\'e $5$, d'apr\`es Riemann-Roch, la dimension de l'espace $H^0(C, \mathcal O_{C}(1))$ vaut $5$ aussi. Dans ce cas, la courbe $C$ est non-d\'eg\'en\'er\'ee si et seulement si elle est lin\'eairement normale. D'apr\`es \cite[Prop. IV.1.2]{Hu86}, la courbe $C$ est alors aussi projectivement normale.

Dans la suite de ce texte, on aura besoin d'espaces de modules de courbes $C\subset X$ de genre $1$ et de degr\'e $5$ ou $6$. Suivant \cite{MT01, IM00, Vo13}, pour $d=5, 6$ on consid\`ere $M_{d,1}(X)$ l'union des composantes, avec la structure r\'eduite, du sch\'ema de Hilbert de $X$
 dont le point g\'en\'eral param\`etre les courbes $C \subset X$,
 lisses connexes, de genre $1$,  de degr\'e $d$, contenues dans $X$ et  non-d\'eg\'en\'er\'ees dans  $\mathbb P^4_{\mathbb C}$. Les points de ces espaces  $M_{5,1}(X)$ et  $M_{6,1}(X)$  param\`etrent donc  des courbes projectives (peut-\^etre r\'eductibles) de degr\'e $5$ et $6$ respectivement.
L'espace  $M_{5,1}(X)$ est irr\'eductible, de dimension $10$
   \cite[Thm. 4.5]{MT01},   \cite[Thm. 8.1]{HRS1}.
 Pour $X$  g\'en\'erale, 
l'espace $M_{6,1}(X)$ est irr\'eductible  de dimension 12 \cite[p.~153]{Vo13}.

On fixe  une d\'esingularisation $\tau_5:\tilde M_{5,1}(X) \to M_{5,1}(X)$ (resp. $\tau_6:\tilde M_{6,1}(X) \to M_{6,1}(X)$).  Les morphismes $\tau_5$ et $\tau_6$ sont des morphismes propres birationnels.

Si $Z_d\subset M_{d,1}(X)\times X$, $d=5,6$ est la restriction de la famille universelle, d'apr\`es le Th\'eor\`eme \ref{ReprAlgd}\,(iv), on dispose d'une application rationnelle $M_{d,1}(X)\dashrightarrow J_d$.
Puisque $\tilde M_{d,1}(X)$ est lisse,  cette application s'\'etend  en 
un morphisme $\tilde M_{d,1}(X)\to J_d$, d\'efini \`a partir de la famille $Z_d\times_{M_{d,1}(X)} \tilde{M}_{d,1}(X)$. On l'appelle ici l'application {\it d'Abel-Jacobi}.

\begin{theo}{\rm ({\cite[Thm. 3.2]{IM00}, \cite[Thm. 5.6]{MT01},
\cite[Thm. 4.7]{Mb16},
 \cite[Thm. 3.1]{Vo13}})}\label{fibresRC}
Soient $k=\C$ et $X\subset \PP^4_k$ un solide  cubique tr\`es g\'en\'eral.
Alors les applications d'Abel-Jacobi 
\begin{equation*}
\tilde M_{5,1}(X)\to J_5\mbox{ et } \tilde M_{6,1}(X)\to J_6
\end{equation*}
d\'efinies ci-dessus sont surjectives, \`a
  fibre g\'en\'erique g\'eom\'etrique 
rationnellement connexe. 
\end{theo}

La d\'emonstration de ces r\'esultats repose sur des arguments de g\'eom\'etrie projective \'elabor\'es.  Les arguments dans \cite{IM00, MT01, Vo13}  utilisent l'hypoth\`ese $k=\C$.  Dans le cas de l'espace de modules $\tilde M_{5,1}(X)$  l'\'enonc\'e vaut  pour $k$  alg\'ebriquement clos de caract\'eristique positive, diff\'erente de $2$, d'apr\`es \cite{Mb16}.  Dans ce cas, on a un \'enonc\'e plus fort :
sur les $k$-points  g\'en\'eraux, les fibres  de l'application d'Abel-Jacobi sont isomorphes \`a $\PP^5_{k}$.

\subsection{Construction en famille}\label{sfamille} Soit $P\subset \PP^{34}_{\C}$ l'ouvert, dans l'espace projectif des coefficients des solides cubiques, qui param\`etre les solides lisses et soit  $\mathcal X\to P$ la famille universelle correspondante. On dispose d'une famille $\mathcal S\to P$ dont la fibre en un point $t\in P$ est la surface de Fano de $\mathcal X_{t}$. On dispose aussi d'une correspondance d'incidence $\mathcal V\subset \mathcal S\times \mathcal X$ et on note comme avant $p:\mathcal V\to \mathcal S$ et $q:\mathcal V\to \mathcal X$ les deux projections:
\begin{equation}\label{dP}\xymatrix{
 &\mathcal V\ar[dd]\ar[ld]_p\ar[rd]^q&  &\\
\mathcal S\ar[rd]&  & \mathcal X\ar[ld]&\\
&  P&  &.
}
\end{equation}
Dans ce diagramme, les morphismes $p$ et $\mathcal S\to  P$ sont lisses, et donc le morphisme $\mathcal V\to  P$ l'est aussi. Les sch\'emas quasi-projectifs $\mathcal V, \mathcal X, P$ sont des $\mathbb C$-sch\'emas lisses. Le morphisme $q$ est un morphisme localement d'intersection compl\`ete, puisque les sch\'emas $\mathcal V$ et $\mathcal X$ sont lisses sur $\mathbb C$.

Soit  $\mathcal M_{d,1}$, $d=5,6$  la composante  irr\'eductible (avec sa structure r\'eduite) du sch\'ema de Hilbert  relatif $Hilb_{\mathcal X/P}$ dont le point complexe g\'en\'eral param\`etre 
  une courbe lisse connexe de genre $1$ et de degr\'e $d$, non d\'eg\'en\'er\'ee dans $\mathbb P^4_{\mathbb C}$, et qui  domine $ P$. Une telle composante existe et elle est unique puisque   l'espace $M_{d,1}(\mathcal X_t)$ est irr\'eductible pour $t\in P(\mathbb C)$ g\'en\'eral, de dimension $10$ pour $d=5$ et de dimension $12$ pour $d=6$ (voir  la section pr\'ec\'edente). On a que $\mathcal M_{d,1}$ est un sous-sch\'ema ferm\'e du sch\'ema de Hilbert relatif $Hilb_{\mathcal X/P}$, et que $\mathcal M_{d,1,t}\subset M_{d,1}(\mathcal X_t)$ pour $t\in  P(\mathbb C)$. On fixe $\tau_d:\tilde{ \mathcal M}_{d,1}\to \mathcal M_{d,1}$ une d\'esingularisation de $\mathcal M_{d,1}$. Le morphisme $\tau_d$ est propre est birationnel.
 
On dispose d'une famille universelle $\mathcal Z_d\subset  \mathcal X \times_{ P}\mathcal M_{d,1}$, la projection $\mathcal Z_d\to \mathcal M_{d,1}$ est un morphisme 
plat, dont les fibres sont des courbes de degr\'e $d$ et de genre $1$.  On note $\tilde{\mathcal Z_d}=\mathcal Z_d\times_{\mathcal M_{d,1}}\tilde{\mathcal M}_{d,1}$ la famille induite, de sorte que le digramme suivant est un produit fibr\'e
 $$\xymatrix{
 \tilde{\mathcal Z}_d\ar@{^{(}->}[r]\ar[d]& \mathcal X\times_{ P} \tilde{\mathcal M}_{d,1} \ar[d]&\\
 \mathcal Z_d\ar@{^{(}->}[r]& \mathcal X \times_{ P}\mathcal M_{d,1} .&
 }
$$

On consid\`ere le diagramme suivant induit par le diagramme (\ref{dP}):
\begin{equation}\label{dP2}\xymatrix{
 &\mathcal V\times_P\tilde{\mathcal M}_{d,1}\ar[dd]\ar[ld]_{ p}\ar[rd]^{ q}&  &\\
\mathcal S\times_P\tilde{\mathcal M}_{d,1}\ar[rd]&  & \mathcal X\times_P\tilde{\mathcal M}_{d,1}\ar[ld]&\\
& \tilde{\mathcal M}_{d,1}&  &.
}
\end{equation}

Les morphismes $p$ et $\mathcal V\times_P\tilde{\mathcal M}_{d,1}\to \tilde{\mathcal M}_{d,1}$ sont lisses, les sch\'emas $\tilde{\mathcal M}_{d,1}$, $\mathcal V\times_P\tilde{\mathcal M}_{d,1}$ et $\mathcal X\times_P\tilde{\mathcal M}_{d,1}$ sont lisses aussi, cela implique en particulier que le morphisme $q$ est un morphisme localement d'intersection compl\`ete. On note 
\begin{equation}
q^*: CH_*(\mathcal X\times_P\tilde{\mathcal M}_{d,1})\to CH_*(\mathcal V\times_P\tilde{\mathcal M}_{d,1})
\end{equation}
le morphisme de Gysin "raffin\'e" qui est d\'efini pour tout morphisme localement d'intersection compl\`ete dans \cite[Chapter 6.6]{Fulton}. Puisque $\tilde p$ est propre, le morphisme 
\begin{equation*}
p_*: CH_*(\mathcal V\times_P\tilde{\mathcal M}_{d,1})\to CH_*(\mathcal S\times_P\tilde{\mathcal M}_{d,1})
\end{equation*}
 est bien d\'efini lui aussi. 
On peut donc d\'efinir la classe  $p_*q^*\tilde{\mathcal Z_d}$ dans 
\begin{equation*}
CH^1(\mathcal S\times_P\tilde{\mathcal M}_{d,1})=\Pic(\mathcal S\times_P\tilde{\mathcal M}_{d,1}),
\end{equation*} induite par la famille $\tilde{\mathcal Z_d}$; la derni\`ere \'egalit\'e vient du fait que le sch\'ema $S\times_P\tilde{\mathcal M}_{d,1}$ est lisse. 
 
 Soit  ${\bf Pic}_{\mathcal S/P}$   le  
$P$-sch\'ema 
de Picard de $\mathcal S$ : ce sch\'ema repr\'esente le faisceau  $\Pic_{\mathcal S/P, (\acute{e}t)}$ associ\'e au foncteur $\Pic_{\mathcal S/P}$ d\'efini par 
\begin{equation*}
\Pic_{\mathcal S/P}(T)=\Pic(\mathcal S\times_P T)/\Pic(T).
\end{equation*}
L'image de $p_*q^*\tilde{\mathcal Z_d}$ par la surjection naturelle 
$$ \Pic(\mathcal S\times_P\tilde{\mathcal M}_{d,1})\to  \Pic(\mathcal S\times_P\tilde{\mathcal M}_{d,1})/\Pic(\tilde{\mathcal M}_{d,1})$$ induit un \'el\'ement de ${\bf Pic}_{\mathcal S/P}(\tilde{\mathcal M}_{d,1})$. On a donc un morphisme 
\begin{equation}\label{AJdsPic}
\tilde{\mathcal M}_{d,1}\to {\bf Pic}_{\mathcal S/P}.
\end{equation}

\begin{lem}\label{AJfonctorialite}
Soit $a_1\in P$ un point et soit $F=\kappa(a_1)$ le corps r\'esiduel de $a_1$.  Soient $a_2\in \tilde{\mathcal M}_{d,1}(F)$ au-dessus du point $a_1$   et  $a_3=\tau_d(a_2)\in {\mathcal M}_{d,1}(F)$. On a alors
$$(p_*q^*\tilde{\mathcal Z}_{d})|_{a_2}=p_*q^*(\tilde {\mathcal Z}_{d, a_2})=p_*q^*(\mathcal Z_{d,a_3}),$$
o\`u $\tilde {\mathcal Z}_{d, a_2}$ (resp. $\mathcal Z_{d,a_3}$) est la fibre de $\tilde {\mathcal Z}_{d}$ (resp. $ {\mathcal Z}_{d}$) en un point $a_2$ (resp. $a_3$), le morphisme $p_*q^*$ est d\'efini ci-dessus (voir aussi (\ref{AJAlg})) et $(p_*q^*\tilde{\mathcal Z}_{d})|_{a_2}$ est la reststriction du cycle $(p_*q^*\tilde{\mathcal Z}_{d})$ au point $a_2$: l'image par le morphisme de Gysin associ\'e au morphisme $a_2\to \tilde{\mathcal M}_{d,1}$ entre les sch\'emas lisses.
\end{lem}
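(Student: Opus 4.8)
L'id\'ee est de d\'eduire le lemme des propri\'et\'es de fonctorialit\'e des homomorphismes de Gysin raffin\'es (\cite[Chapter 6]{Fulton}). On commence par noter que les sch\'emas du diagramme (\ref{dP2}) sont lisses sur $\C$, que $p$ y est lisse et propre, que $q$ y est localement d'intersection compl\`ete, et que la fibre de (\ref{dP2}) au-dessus du point $a_2\in\tilde{\mathcal M}_{d,1}(F)$, situ\'e au-dessus de $a_1\in P$, s'identifie au diagramme d'incidence (\ref{dP}) attach\'e \`a la fibre $X=\mathcal X_{a_1}$, muni de ses projections $p:V\to S$ et $q:V\to X$ ; en particulier ce morphisme $q:V\to X$ est lui aussi localement d'intersection compl\`ete, les $F$-vari\'et\'es $V$ et $X$ \'etant lisses. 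Comme $\tilde{\mathcal M}_{d,1}$ est r\'egulier, le morphisme $a_2:\Spec F\to\tilde{\mathcal M}_{d,1}$ se factorise en une localisation plate au point image de $a_2$ suivie d'une immersion r\'eguli\`ere (ou encore via son graphe dans $\tilde{\mathcal M}_{d,1}\times_\C\Spec F$) ; on lui associe ainsi un homomorphisme de Gysin raffin\'e $\sigma^!$, qui r\'ealise par d\'efinition la restriction $(\,\cdot\,)|_{a_2}$.

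Le point central est alors que $\sigma^!$ commute successivement avec l'image directe propre $p_*$ et avec l'image inverse raffin\'ee $q^*$. D'abord, la compatibilit\'e des homomorphismes de Gysin raffin\'es \`a l'image directe propre (\cite[Chapter 6]{Fulton}), appliqu\'ee au carr\'e cart\'esien d\'eduit de (\ref{dP2}) en restreignant au-dessus de $a_2$, donne
\begin{equation*}
(p_*q^*\tilde{\mathcal Z}_d)|_{a_2}=p_*\bigl((q^*\tilde{\mathcal Z}_d)|_{a_2}\bigr),
\end{equation*}
o\`u le $p_*$ de droite est celui de $p:V\to S$. Ensuite, la commutativit\'e des homomorphismes de Gysin raffin\'es, c'est-\`a-dire la compatibilit\'e d'une image inverse raffin\'ee au changement de base (\cite[Chapter 6]{Fulton}), donne
\begin{equation*}
(q^*\tilde{\mathcal Z}_d)|_{a_2}=q^*\bigl(\tilde{\mathcal Z}_d|_{a_2}\bigr),
\end{equation*}
le $q^*$ de droite \'etant celui de $q:V\to X$. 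Enfin, puisque la famille $\tilde{\mathcal Z}_d\subset\mathcal X\times_P\tilde{\mathcal M}_{d,1}$ est plate sur $\tilde{\mathcal M}_{d,1}$, la restriction $\tilde{\mathcal Z}_d|_{a_2}$ n'est autre que le cycle $[\tilde{\mathcal Z}_{d,a_2}]$ de la fibre, l'intersection \'etant propre et de la dimension attendue. En combinant ces trois identit\'es on obtient la premi\`ere \'egalit\'e annonc\'ee, $(p_*q^*\tilde{\mathcal Z}_d)|_{a_2}=p_*q^*(\tilde{\mathcal Z}_{d,a_2})$, le $p_*q^*$ du second membre \'etant la correspondance d'incidence fibr\'ee (\ref{AJAlg}).

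Pour la seconde \'egalit\'e, on part de la d\'efinition $\tilde{\mathcal Z}_d=\mathcal Z_d\times_{\mathcal M_{d,1}}\tilde{\mathcal M}_{d,1}$ : la fibre de $\tilde{\mathcal Z}_d$ au-dessus de $a_2$ est la fibre de $\mathcal Z_d$ au-dessus de $\tau_d(a_2)=a_3$, et comme $a_2$ et $a_3$ ont m\^eme corps r\'esiduel $F$ et m\^eme image $a_1$ dans $P$, on a $\tilde{\mathcal Z}_{d,a_2}=\mathcal Z_{d,a_3}$ comme sous-sch\'emas ferm\'es de $X=\mathcal X_{a_1}$, d'o\`u $p_*q^*(\tilde{\mathcal Z}_{d,a_2})=p_*q^*(\mathcal Z_{d,a_3})$. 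Le point le plus d\'elicat sera le premier : donner un sens pr\'ecis \`a l'op\'eration $(\,\cdot\,)|_{a_2}$ et \'etablir ses compatibilit\'es ci-dessus lorsque $a_2$ n'est pas un point ferm\'e, le morphisme $\Spec F\to\tilde{\mathcal M}_{d,1}$ n'\'etant alors pas de type fini. On le traite soit par la factorisation indiqu\'ee plus haut, soit en changeant d'abord la base de tout le diagramme (\ref{dP2}) le long de $a_1:\Spec F\to P$, ce qui ram\`ene la situation \`a des $F$-vari\'et\'es lisses, cadre dans lequel les r\'esultats de \cite[Chapter 6]{Fulton} s'appliquent directement.
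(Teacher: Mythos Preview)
Your proof is correct and follows essentially the same approach as the paper: both arguments reduce the identity to the compatibility of refined Gysin maps with proper pushforward $p_*$ and with the l.c.i.\ pullback $q^*$ as in \cite[Chapter 6]{Fulton}, together with the flatness of $\tilde{\mathcal Z}_d$ over $\tilde{\mathcal M}_{d,1}$ and the tautological equality $\tilde{\mathcal Z}_{d,a_2}=\mathcal Z_{d,a_3}$. The only cosmetic difference is in the treatment of non-closed points: the paper first does the case $F=\C$ and then spreads out over a small open in the closure of $a_1$, whereas you work directly with the factorization of $\Spec F\to\tilde{\mathcal M}_{d,1}$ (or a base change along $a_1$); both are legitimate ways of staying within the scope of \cite[Chapter 6]{Fulton}.
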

\begin{proof}
On note d'abord que, puisque $\tilde {\mathcal Z}_{d}$ est plat sur $\tilde{\mathcal M}_{d,1}$, on a que la fibre $\tilde {\mathcal Z}_{d, a_2}$ est une courbe de degr\'e $d$ dont la classe dans $CH_1(\mathcal X_{a_1})$ est bien d\'efinie. On a l'\'egalit\'e  $\tilde {\mathcal Z}_{d, a_2}=\mathcal Z_{d,a_3}$ d'apr\`es la d\'efinition. Supposons d'abord $F=\mathbb C$.  On consid\`ere le produit fibr\'e suivant
$$\xymatrix{
 \mathcal V_{a_1}\ar[r]\ar[d]^q& \mathcal V\times_P \tilde{\mathcal M}_{d,1}\ar[d]^q&\\
 \mathcal X_{a_1}\ar[r]& \mathcal X\times_P \tilde{\mathcal M}_{d,1}&
 }
$$
Tous les sch\'emas dans ce diagramme sont des sch\'emas lisses; toutes les fl\`eches sont donc des morphismes localement d'intersection compl\`ete. Par la fonctorialit\'e de la construction du morphisme de Gysin \cite[Theorem 6.5, Theorem 6.6]{Fulton} on a 
\begin{equation}\label{comp1}
q^*(\tilde {\mathcal Z}_{d, a_2})=(q^*\tilde{\mathcal Z}_{d})|_{a_2}
\end{equation} pour le cycle 
$\tilde {\mathcal Z}_{d, a_2}\in CH_1(\mathcal X_{a_1})$.
De m\^eme, on a le diagramme 
$$\xymatrix{
 \mathcal V_{a_1}\ar[r]\ar[d]^p& \mathcal V\times_P \tilde{\mathcal M}_{d,1}\ar[d]^p&\\
 \mathcal S_{a_1}\ar[r]& \mathcal S\times_P \tilde{\mathcal M}_{d,1}&
 }
$$
et, par la compatibilit\'e des morphismes $p_*$ et des morphismes de Gysin (c'est-\`a-dire des restrictions $|_{a_2}$) \cite[Theorem 6.2, Theorem 6.6]{Fulton} on obtient 
\begin{equation}\label{comp2}
p_*((q^*\tilde{\mathcal Z}_{d})|_{a_2})=(p_*q^*\tilde{\mathcal Z}_{d})|_{a_2}.
\end{equation} 
On d\'eduit donc des \'egalit\'es (\ref{comp1}) et (\ref{comp2}) le r\'esultat voulu:
\begin{equation*}
p_*q^*(\tilde {\mathcal Z}_{d, a_2})=p_*((q^*\tilde{\mathcal Z}_{d})|_{a_2})=(p_*q^*\tilde{\mathcal Z}_{d})|_{a_2}.
\end{equation*}

Le cas g\'en\'eral ($F$ est le corps r\'esiduel du point $a_1$) se montre par le m\^eme argument: on remplace $ \mathcal V_{a_1}$ par des sch\'emas lisses au-dessus d'un ouvert (suffisamment petit) dans l'adh\'erence du point $a_1$.
\qed
\end{proof}

Soit $\mathcal J\to P$  la famille de ``Jacobiennes interm\'ediaires"  $\mathcal J=\mathcal J_0={\bf Pic}^0_{ \mathcal S/P}$.

\begin{prop}\label{Jdenf}
Pour tout $d\in \Z$ il existe un sous-sch\'ema ferm\'e 
\begin{equation*}
\mathcal J_{d}\subset {\bf Pic}_{\mathcal S/P}
\end{equation*}
 tel que, pour tout $s\in P$, $\mathcal J_{d, s}=J_d(\mathcal X_s)$ comme d\'efini dans le th\'eor\`eme \ref{ReprAlgd}\,(iii).
\end{prop}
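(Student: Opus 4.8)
The plan is to build the family $\mathcal J_d$ directly from the absolute construction of $J_d(X)$ given in Theorem \ref{ReprAlgd}, carried out in the relative setting over the base $P$. Recall that in Theorem \ref{ReprAlgd}(iii)--(iv) the variety $J_d(X)$ was realized as a connected component of ${\bf Pic}_{S/k}$, namely the component whose $\bar k$-points are the classes $p_*q^*\gamma$ for $\gamma\in CH^2(\ovX)$ of degree $d$; equivalently, via the diagram (\ref{diagJd}), it sits over the element $p_*q^*d\in {\bf NS}_{S/k}(\bar k)$. So the key point is to produce, over all of $P$, a section of the Néron--Severi sheaf ${\bf NS}_{\mathcal S/P}$ whose value at each $s\in P$ is the class $p_*q^*d$ attached to the fibre $\mathcal X_s$, and then take $\mathcal J_d$ to be the preimage of that section under ${\bf Pic}_{\mathcal S/P}\to {\bf NS}_{\mathcal S/P}$.

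First I would use the relative incidence correspondence $\mathcal V\subset \mathcal S\times_P\mathcal X$ of diagram (\ref{dP}), together with the refined Gysin map $q^*$ (available because $q$ is a local complete intersection morphism, as noted after (\ref{dP})) and the proper pushforward $p_*$ (available because $p$ is smooth and proper), to transport cycles on $\mathcal X$ to cycles on $\mathcal S$. Concretely, after possibly shrinking $P$ or passing to an \'etale cover one has a relative line (or more simply a relative hyperplane section, which is all that is needed: the class $d$ in $CH^2(\ovX)\to\Z$ is the intersection with a hyperplane) giving a cycle $\mathcal H\subset\mathcal X$ flat over $P$ whose fibres are curves of degree $d$ on $\mathcal X_s$; applying $p_*q^*$ to $[\mathcal H]$ produces a class in $\Pic(\mathcal S\times_P T)$ for $T\to P$, hence a section of ${\bf Pic}_{\mathcal S/P}$ over $P$, whose image in ${\bf NS}_{\mathcal S/P}(P)$ is the desired section $\sigma_d$. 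By Theorem \ref{ReprAlgd}(iv) applied fibrewise (or by the compatibility of $p_*q^*$ with base change to a point, exactly as in Lemma \ref{AJfonctorialite}), the value of $\sigma_d$ at any $s\in P$ is precisely the class $p_*q^*d$ that defines $J_d(\mathcal X_s)$ as a component of ${\bf Pic}_{\mathcal X_s}$. One then sets $\mathcal J_d:=$ the fibre over $\sigma_d$ of the map ${\bf Pic}_{\mathcal S/P}\to {\bf NS}_{\mathcal S/P}$; since ${\bf NS}_{\mathcal S/P}$ is unramified (even \'etale, its fibres being finitely generated and torsion-free here) over $P$, this fibre is a closed subscheme of ${\bf Pic}_{\mathcal S/P}$, and it is a torsor under ${\bf Pic}^0_{\mathcal S/P}=\mathcal J$. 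Its fibre at $s$ is the component of ${\bf Pic}_{\mathcal X_s}$ over $p_*q^*d$, which is $J_d(\mathcal X_s)$ by the description in Theorem \ref{ReprAlgd}(iii).

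Alternatively, and perhaps more cleanly, I would avoid choosing any auxiliary cycle and argue purely on ${\bf NS}_{\mathcal S/P}$: the formation of the Néron--Severi group commutes with the base changes in play (the surfaces $\mathcal S_s$ form a smooth proper family of geometrically connected surfaces, so $R^2$ behaves well and the Néron--Severi sheaf is locally constant for the \'etale topology), and the assignment $d\mapsto p_*q^*d$ is, by construction of $p_*q^*$ via the relative correspondence $\mathcal V$, an honest morphism of sheaves $\underline{\Z}\to {\bf NS}_{\mathcal S/P}$ over $P$; its value at $1$ is a section $\sigma_1$, and we put $\sigma_d=d\cdot\sigma_1$ and define $\mathcal J_d$ as its preimage. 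The identification of the fibres with $J_d(\mathcal X_s)$ is then immediate from Theorem \ref{ReprAlgd}(iii).

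The main obstacle I anticipate is purely bookkeeping: one must check that the relative $p_*q^*$ construction is genuinely compatible with restriction to a point of $P$ (so that the fibre-by-fibre description of Theorem \ref{ReprAlgd} applies), which is the content already isolated in Lemma \ref{AJfonctorialite} but now needs the case where the ``family'' parameter is a point of $P$ rather than of $\tilde{\mathcal M}_{d,1}$; this is again handled by the functoriality of refined Gysin maps \cite[Ch.~6]{Fulton} together with the fact that all schemes involved ($\mathcal V,\mathcal S,\mathcal X,P$) are smooth over $\C$. One must also be slightly careful that ${\bf Pic}_{\mathcal S/P}\to{\bf NS}_{\mathcal S/P}$ has closed fibres over sections: this follows because ${\bf NS}_{\mathcal S/P}$ is separated and unramified over $P$ (the surfaces are of general type-independent Hodge-theoretic shape here, but all that is used is that $NS$ is finitely generated with the same rank on geometric fibres, which holds since the family $\mathcal S\to P$ is smooth proper and $P$ is connected), so a section of it is a closed immersion and its preimage is closed. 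Everything else is a matter of assembling the pieces already proved in Theorem \ref{ReprAlgd} and Lemma \ref{AJfonctorialite}.
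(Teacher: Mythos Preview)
Your approach is correct and very close in spirit to the paper's, but the packaging of the gluing step differs. The paper does not invoke a N\'eron--Severi scheme at all: it simply chooses an \'etale cover $\{U_i\to P\}$ with $\mathcal S(U_i)\neq\emptyset$, defines $\mathcal J_{d,U_i}$ as the translate of ${\bf Pic}^0_{\mathcal S/P}\times_P U_i$ by $d\cdot p_*q^*L_{U_i}$ for a chosen family of lines $L_{U_i}$, checks directly that this is independent of the choice of $L_{U_i}$ (two lines being algebraically equivalent in each fibre, their difference lands in ${\bf Pic}^0$), and then applies fpqc descent of closed subschemes \cite[Ch.~6, Thm.~4]{BLR} to glue the $\mathcal J_{d,U_i}$ into a closed subscheme $\mathcal J_d\subset{\bf Pic}_{\mathcal S/P}$. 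Your route via a global section $\sigma_d$ of ${\bf NS}_{\mathcal S/P}$ and the preimage under ${\bf Pic}_{\mathcal S/P}\to{\bf NS}_{\mathcal S/P}$ amounts to the same thing but trades the elementary descent statement for the representability and \'etaleness of ${\bf NS}_{\mathcal S/P}$ over $P$; the paper's version is slightly more economical in what it assumes.

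One small slip to fix: your parenthetical suggestion to use a relative hyperplane section instead of a relative line does not work for arbitrary $d$. The class $h^2\in CH^2(\mathcal X_s)$ has degree $3$ under $CH^2(\ovX)\to\Z$, so translating by multiples of $p_*q^*h^2$ only produces $\mathcal J_{3m}$. To reach every $d$ you genuinely need a degree-$1$ cycle, i.e.\ a line, which is exactly why the paper passes to an \'etale cover on which $\mathcal S$ acquires a section. Your second formulation (``put $\sigma_d=d\cdot\sigma_1$'') implicitly uses a line and is fine.
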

\begin{proof}
Pour tout $U\to P$ un sch\'ema lisse au-dessus de $P$ tel que $S(U)\neq \emptyset$ on d\'efinit un sous-sch\'ema $\mathcal J_{d, U}\subset {\bf Pic}_{\mathcal S/P}\times_P U$ comme le translat\'e du sch\'ema ${\bf Pic}^0_{\mathcal S/P}\times_P U$ par $dp_*q^*L_U$,
o\`u $L_U$ est une famille de droites qui correspond \`a un \'el\'ement de $S(U)$. Notons que cette d\'efinition ne d\'epend pas du choix de $L_U$, car pour une autre famille $L'_U$ on a que pour tout point g\'eom\'etrique $\bar u\in U$ les classes des droites $L_{\bar u}$ et $L'_{\bar u}$ sont alg\'ebriquement \'equivalentes dans $CH^2(\mathcal X_{\bar u})$, donc $dp_*q^*L_U-dp_*q^*L'_U$  induit un \'el\'ement de ${\bf Pic}_{\mathcal S/P}^0(U)$. On  en d\'eduit en particulier que pour tout $V\to U$ avec $V$ lisse, $\mathcal J_{d, V}=\mathcal J_{d, U}\times_U V.$

On choisit un recouvrement \'etale $\{U_i\to P\}_{i\in I}$ de $P$ tel que $\mathcal S(U_i)\neq \emptyset$ pour tout $i\in I$. D'apr\`es la construction ci-dessus, on dispose des sous-sch\'emas  $\mathcal J_{d, U_i}\subset {\bf Pic}_{\mathcal S/P}\times_P U_i$, tels que pour tout $i,j\in I$ on a $\mathcal J_{d,U_i}\times_{U_i} U_j=\mathcal J_{d,U_j}\times_{U_j} U_i$ comme  sous-sch\'emas de ${\bf Pic}_{\mathcal S/P}\times_P U_i\times_P U_j$. Par la descente  {\it fpqc} des sous-sch\'emas ferm\'es \cite[Chapter 6, Thm. 4]{BLR}, il existe un sous-sch\'ema ferm\'e $\mathcal J_d\subset {\bf Pic}_{\mathcal S/P}$ tel que $\mathcal J_{d, U_i}=\mathcal J_d\times_P U_i$ pour tout $i\in I$. 

D'apr\`es la construction, on a  $\mathcal J_{d, s}=J_d(\mathcal X_s)$  pour tout $s\in P$.
\qed
\end{proof}

Puisque $\tilde{\mathcal M}_{d,1}$ param\`etre des courbes de degr\'e $d$,
on a que
 le morphisme (\ref{AJdsPic}) se factorise par le sch\'ema $\mathcal J_{d}\subset {\bf Pic}_{\mathcal S/P}$ et induit donc un morphisme alg\'ebrique
\begin{equation}\label{AJM}
\phi_d: \tilde{\mathcal M}_{d,1}\to \mathcal J_d
\end{equation}
au-dessus de $P$.

\begin{prop}\label{pointsfibres}
Soit  $\mathcal X\to P$ la famille des solides cubiques lisses. 
Pour $d \in \{5,6\}$, soit
 $\phi_d: \tilde{\mathcal M}_{d,1}\to \mathcal J_d$ l'application d\'efinie ci-dessus. 

 Pour tout point $s\in \mathcal J_d$ de corps r\'esiduel $k=\kappa(s)$ le corps des fonctions d'une variable sur $\C$, la fibre $\tilde {\mathcal M}_{d,1, s}$ au-dessus de $s$ admet un $k$-point : $\tilde {\mathcal M}_{d,1, s}(k)\neq \emptyset.$ 
\end{prop}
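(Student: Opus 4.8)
The plan is to exhibit $\tilde{\mathcal M}_{d,1,s}$ as the special fibre of a degeneration, over a discrete valuation ring realised inside $\mathcal J_d$, whose geometric generic fibre is rationally connected; this is possible because, by Theorem~\ref{fibresRC}, the Abel--Jacobi morphism $\phi_d$ has rationally connected fibres over a dense open of $\mathcal J_d$. One then concludes by combining the results of \cite{GHMS05,HX09} on degenerations of (not necessarily smooth) families of rationally connected varieties with the theorem of Graber--Harris--Starr \cite{GHS03}, according to which a smooth projective rationally connected variety over the function field of a complex curve has a rational point.

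The first step is to record two facts about $\phi_d\colon\tilde{\mathcal M}_{d,1}\to\mathcal J_d$. It is a proper morphism between smooth irreducible complex varieties: it is a morphism of $P$-schemes both of which are proper over $P$ (for $\mathcal J_d$ because $\mathcal J_d\to P$ is a torsor under an abelian scheme, cf.\ Proposition~\ref{Jdenf}; for $\tilde{\mathcal M}_{d,1}$ because it is a desingularisation of a component of the relative Hilbert scheme), and $\mathcal J_d$ and $\tilde{\mathcal M}_{d,1}$ are smooth. Moreover there is a dense open $\mathcal J_d^\circ\subseteq\mathcal J_d$ over which $\phi_d$ is smooth with geometrically rationally connected fibres: by Theorem~\ref{fibresRC} there is a very general complex cubic threefold $X=\mathcal X_a$, with $a$ in a dense open of $P$, for which $\phi_{d,X}$ is surjective with rationally connected generic fibre, and by the construction of \S\ref{sfamille}, Lemma~\ref{AJfonctorialite} and generic smoothness of $\tilde{\mathcal M}_{d,1}\to P$ in characteristic zero, the restriction of $\phi_d$ over such an $a$ is, up to a desingularisation of its source, $\phi_{d,X}$; hence $\phi_d$ has a \emph{smooth} rationally connected fibre over some closed point of $\mathcal J_d$; since smoothness is an open condition and rational connectedness is open in smooth proper families (Koll\'ar--Miyaoka--Mori), the locus $\mathcal J_d^\circ$ of points over which $\phi_d$ is smooth with rationally connected fibre is a nonempty, hence dense, open of $\mathcal J_d$.

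Now take $s\in\mathcal J_d$ with $\kappa(s)=k$ of transcendence degree $1$ over $\C$, so $Z:=\overline{\{s\}}$ is an irreducible curve with generic point $s$. Choose, in a projective embedding of $\mathcal J_d$, a general complete intersection surface $\Sigma\subseteq\mathcal J_d$ containing $Z$ and of sufficiently high degree. For such a $\Sigma$ its generic point $s'$ lies in $\mathcal J_d^\circ$ (a general surface through $Z$ is not contained in the proper closed subset $\mathcal J_d\setminus\mathcal J_d^\circ$), and $\Sigma$ is smooth at the generic point $s$ of $Z$ (at a general $z\in Z$ the general equations cutting out $\Sigma$ have independent differentials), so the local ring $R:=\mathcal O_{\Sigma,s}$ is a discrete valuation ring with fraction field $\kappa(s')$ and residue field $\kappa(s)=k$. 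Pulling $\phi_d$ back along $\Spec R\to\Sigma\hookrightarrow\mathcal J_d$ yields a proper $R$-scheme $\tilde{\mathcal M}_{d,1}\times_{\mathcal J_d}\Spec R$ whose generic fibre is $\phi_d^{-1}(s')$, geometrically rationally connected since $s'\in\mathcal J_d^\circ$, and whose special fibre is exactly $\tilde{\mathcal M}_{d,1,s}$.

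Finally one applies to (the scheme-theoretic closure of the generic fibre of) this degeneration the results of \cite{GHMS05,HX09}: the special fibre of a flat proper family over a trait with geometrically rationally connected generic fibre contains a geometrically rationally connected closed subvariety. As the argument is carried out over $R$ itself, this subvariety $W$ is defined over the residue field $k$; after resolving the singularities of $W$ and invoking \cite{GHS03} over $k$ --- which is a function field of one variable over $\C$ --- one obtains a $k$-point of $W$, hence $\tilde{\mathcal M}_{d,1,s}(k)\neq\emptyset$. The main obstacle is precisely this last step: one needs the ``degeneration of rationally connected varieties'' available over a non-algebraically-closed residue field --- equivalently, one must descend the rationally connected subvariety produced over $\overline{k}$ along $\Gal(\overline{k}/k)$ while keeping control of the rational connectedness of a resolution --- the difficulty being that outside $\mathcal J_d^\circ$ the fibres of $\phi_d$ are not known to be rationally connected. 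A subsidiary technical point, already settled by Lemma~\ref{AJfonctorialite} and generic smoothness, is the identification of the fibres of $\tilde{\mathcal M}_{d,1}$ over $P$ with the desingularised moduli spaces $\tilde M_{d,1}(\mathcal X_t)$ on which Theorem~\ref{fibresRC} is phrased.
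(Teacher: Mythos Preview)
Your approach uses exactly the same ingredients as the paper's (\cite{HX09}, \cite{GHMS05}, \cite{GHS03}), but you take an unnecessary detour and then flag as ``the main obstacle'' something that those references already dispose of.

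The paper dispenses entirely with the surface $\Sigma$ and the discrete valuation ring $R$. It applies \cite[Thm.~1.2]{HX09} (see also \cite[Prop.~2.7]{GHMS05}) directly to the proper morphism $\phi_d\colon\tilde{\mathcal M}_{d,1}\to\mathcal J_d$ of $\C$-varieties: once the geometric generic fibre of $\phi_d$ is known to be rationally connected (via Lemma~\ref{AJfonctorialite} and Theorem~\ref{fibresRC}), Hogadi--Xu give, for \emph{every} point $y$ of the base --- closed or not --- a closed subvariety of the fibre, defined over $\kappa(y)$, which is geometrically irreducible and rationally connected. Taking $y=s$ yields $W\subset\tilde{\mathcal M}_{d,1,s}$ defined over $k=\kappa(s)$, geometrically irreducible and rationally connected, and then \cite{GHS03} gives $W(k)\neq\emptyset$. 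The descent problem you isolate as the main obstacle does not arise: Hogadi--Xu already produce $W$ over $\kappa(s)$, not merely over its algebraic closure. Your passage through the trait $\Spec R$ essentially re-derives, by hand, a special case of the reduction built into \cite{HX09}, while manufacturing the very non-algebraically-closed residue field issue you then worry about; it can be suppressed.
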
 
\begin{proof}
D'apr\`es la compatibilit\'e dans le lemme \ref{AJfonctorialite}, on peut appliquer le  th\'eor\`eme \ref{fibresRC} \`a la restriction  de l'application $\phi_d: \tilde{\mathcal M}_{d,1}\to \mathcal J_d$ \`a la fibre g\'en\'erique g\'eom\'etrique, qui est donc   rationnellement connexe. D'apr\`es un r\'esultat de Hogadi et Xu \cite[Thm. 1.2]{HX09} (voir aussi \cite[Prop. 2.7]{GHMS05}), toute fibre de $\phi_d$ au dessus d'un point  (pas n\'ecessairement un point ferm\'e) de $\mathcal J_d$  admet une sous-vari\'et\'e g\'eom\'etriquement irr\'eductible et rationnellement connexe. On applique ce r\'esultat \`a la fibre  $\tilde{{\mathcal M}}_{d,1, s}$ comme dans l'\'enonc\'e : soit $W\subset  \tilde{\mathcal M}_{d,1, s}$ une sous-vari\'et\'e rationnellement connexe, d\'efinie sur $k$. On a alors $W(k)\neq \emptyset$ d'apr\`es le th\'eor\`eme de Graber, Harris et Starr \cite{GHS03}, d'o\`u le r\'esultat.
\qed
\end{proof}

\section{Preuve des th\'eor\`emes  \ref{ThmHIF} et \ref{thmh3nr}}\label{preuvesT}

\label{section-4}

\begin{proof}[Preuve du th\'eor\`eme \ref{thmh3nr}] \

Montrons {\rm (i)}. Soit $\xi\in CH^2(\ovX)^G.$ Soit $h^2\in CH^2(X)$, o\`u $h$ est la classe d'une section hyperplane, le degr\'e de $h$ (la classe de $h$ dans  $H^4(\ovX, \Z(2))\simeq \Z$) vaut $3$. On peut donc supposer, quitte \`a remplacer $\xi$ par $\pm\xi+nh$, o\`u $n\in \Z$, que le degr\'e $d$ de $\xi$ est $d=5$ ou $6$.   Soit $\mathcal X\to P$ la famille universelle des cubiques, comme d\'efinie dans la partie \ref{sfamille}. Puisque la cubique $X$ est lisse, il existe un $k$-point $a_1\in P(k)$ tel que $X=\mathcal X_{t}$. Soit $s=p_*q^*(\xi)\in \mathcal J_{d}(\bar k)$ (cf. Th\'eor\`eme \ref{ReprAlgd} et Proposition \ref{Jdenf}).  
Puisque $\xi \in CH^2(\ovX)^G$ et l'application $p_*q^*$ est \'equivariante pour l'action du groupe $G$ d'apr\`es la d\'efinition (\ref{AJAlg}),
 on en d\'eduit que $s$ provient d'un point de  $\mathcal J_{d}(k)$ (que l'on note toujours $s$). D'apr\`es la proposition \ref{pointsfibres}, il existe un point $a_2\in \tilde{\mathcal M}_{d,1,s}(k)$ au-dessus de $s$.  
Par la fonctorialit\'e dans le lemme \ref{AJfonctorialite}  l'image $a_3$ de ce point 
 dans ${\mathcal M}_{d,1,s}(k)\subset Hilb\, \mathcal X_t(k)$ correspond \`a une courbe $C=\mathcal Z_{d, a_3}$ de genre $1$ de degr\'e $d$ sur $X=\mathcal X_{t}$ telle que $p_*q^*C=s=p_*q^*(\xi)$ et donc l'image de la classe de $C$ dans $CH^2(\ovX)^G$ vaut $\xi$.   
 On en d\'eduit  que l'application $CH^2(X)\to CH^2(\ovX)^G$ est surjective. 

Montrons maintenant  {\rm  (ii)}.  D'apr\`es \cite[Corollaire 4.2(iii)]{CT15}, 
qui s'applique pour $X$ un solide cubique lisse sur $k$  corps de  fonctions d'une variable sur $\C$, on a une suite exacte
\begin{multline*}
H^1(G, \mathrm{Pic}\, \ovX\otimes \bar k^*)\to H^3_{nr}(k(X)/k, \Q/\Z(2))/H^3(k, \Q/\Z(2))\\\to \mathrm{coker}[CH^2(X)\to CH^2(\ovX)^G]\to H^2(G, \mathrm{Pic}\, \ovX\otimes \bar k^*).
\end{multline*}   
Pour \'etablir l'existence de cette suite exacte, on utilise des techniques de $K$-th\'eorie et les groupes de cohomologie motivique \`a coefficients $\mathbb Z(2)$  \cite{Ka96}.

Par \cite[Corollaire XII.3.7]{SGA2}, on a $\mathrm{Pic}\, \ovX\simeq \Z$; 
 le premier groupe est nul par le th\'eor\`eme  $90$ de Hilbert.  On a aussi $H^3(k, \Q/\Z(2))=0$
  car $\cod(k)\leq 1$.
 On en d\'eduit qu'on a une injection $$H^3_{nr}(k(X)/k, \Q/\Z(2))\hookrightarrow \mathrm{coker}[CH^2(X)\to CH^2(\ovX)^G].$$  Le r\'esultat {\rm (i)} donne alors {\rm (ii)}. 
[
On remarque que le groupe $H^2(G, \mathrm{Pic}\, \ovX\otimes \bar k^*) = \Br\,k$ est nul lui aussi car $\cod(k)\leq 1$.]
\qed
\end{proof}

\begin{proof}[Preuve du th\'eor\`eme \ref{ThmHIF}] \

Le\,th\'eor\`eme\,\cite[\!Th\'eor\`eme\,1.1]{CTV12}\,s'applique\,\`a\,$\mathcal X$\,et\,donne\,un\,isomorphisme\,$H^3_{nr}(\C(\mathcal X)/\C, \Q/ \Z(2)) \oi  Z^4(\mathcal X).$ Il suffit donc de consid\'erer le premier groupe. Soient $k=\C(\Gamma)$  et $X/k$ la fibre g\'en\'erique de $f$. Puisque $H^3_{nr}(\C(\mathcal X)/\C, \Q/ \Z(2))\subset H^3_{nr}(k(X)/k, \Q/\Z(2))$, le th\'eor\`eme \ref{thmh3nr}\,(ii) implique   $H^3_{nr}(\C(\mathcal X)/\C, \Q/ \Z(2))=0$.  
\qed
\end{proof}

\begin{rema}{\rm
L'article \cite{Vo13} 
  contient l'analogue du th\'eor\`eme \ref{ThmHIF} pour les 
familles $\mathcal X \to \Gamma$ dont la fibre g\'en\'erique $X$ est une intersection lisse de deux quadriques dans $\PP^5$,
avec  une restriction sur les fibres sp\'eciales \cite[Thm. 1.4]{Vo13} (\cite[Cor. 2.7]{Vo13}).
 Dans ce cas, cette restriction a \'et\'e \'elimin\'ee
dans \cite[Cor. 3.1]{CT12}. Il suffisait  l\`a d'invoquer le calcul de la
cohomologie non ramifi\'ee en degr\'e 3 des quadriques sur un corps quelconque, qui donne $H^3_{nr}(k(X)/k, \Q/\Z(2))=0$ pour $k=\C(\Gamma)$ et donc $H^3_{nr}(\C(\mathcal X)/\C, \Q/\Z(2))=0$.}
 \end{rema}

\providecommand{\bysame}{\leavevmode\hbox to3em{\hrulefill}\thinspace}
%
%

\bibliographystyle{amsalpha}

\begin{thebibliography}{99}

\bibitem{AK77} A. Altman et S. Kleiman, \emph{Foundations of the theory of Fano schemes}, Compositio Math. {\bf 34} (1977), no. 1, 3--47.
\MR{0569043}

\bibitem{ACMV17} J. Achter, S. Casalaina-Martin et Ch. Vial, \emph{On descending cohomology geometrically}, Compositio Math. {\bf 153} (2017), no. 7, 1446--1478.
\MR{3705264}

\bibitem{Beau77} A. Beauville, \emph{Vari\'et\'es de Prym et jacobiennes interm\'ediaires}, Ann. Sci. \'Ecole Norm. Sup. (4) {\bf 10} (1977), no.~3, 309--391.
\MR{0472843}

\bibitem{BS83} S. Bloch et V. Srinivas, \emph{Remarks on correspondences and algebraic cycles}, Amer. J. of
Math. {\bf 105} (1983), no. 5, 1235--1253.
\MR{0714776}

\bibitem{BLR} S. Bosch, W. L\"utkebohmert et M. Raynaud, \emph{N\'eron Models}, Ergebnisse der Mathematik und ihrer Grenzgebiete (3), vol. 21, Springer-Verlag, Berlin, 1990.
\MR{1045822}

\bibitem{CG72} H. Clemens et P. Griffiths, \emph{The intermediate Jacobian of the cubic threefold}, Ann. of Math. (2) {\bf 95} (1972), 281--356.
\MR{0302652}

\bibitem{CT12}  J.-L. Colliot-Th\'el\`ene, \emph{Quelques cas d'annulation du troisi\`eme groupe de cohomologie non ramifi\'ee}. Dans: Regulators (Proceedings of the Regulators III Conference held in Barcelona, July 12-22, 2010), pp.~45--50, Contemporary Math., vol. 571, Amer. Math. Soc., Providence, RI, 2012.
\MR{2953408}


\bibitem{CTV12}  J.-L. Colliot-Th\'el\`ene et C. Voisin, \emph{Cohomologie non ramifi\'ee et conjecture de Hodge enti\`ere}, Duke Math. J. {\bf 161} (2012), no. 5, 735--801.
\MR{2904092}

\bibitem{CTS13} J.-L. Colliot-Th\'el\`ene et A. N. Skorobogatov, \emph{Descente galoisienne sur le groupe de Brauer},  J. reine angew. Math. {\bf 682} (2013), 141--165.
\MR{3181502}

\bibitem{CT15} J.-L.  Colliot-Th\'el\`ene, \emph{Descente galoisienne sur le second groupe de Chow : mise au point et applications}, Doc. Math. (2015), extra vol.: Alexander S. Merkurjev's sixtieth Birthday, 195--220.
\MR{3404380}

\bibitem{CT17} J.-L. Colliot-Th\'el\`ene, \emph{Troisi\`eme groupe de cohomologie non ramifi\'ee des hypersurfaces de Fano},
Tunisian Journal of Mathematics {\bf 1}  (2019), no. 1, 47--57.
 
\bibitem{Fulton} W. Fulton, \emph{Intersection theory}, 2nd edition, Ergebnisse der Mathematik und ihrer Grenzgebiete (3), vol. 2, Springer-Verlag, Berlin, 1998.
\MR{1644323}

\bibitem{GHS03} T. Graber, J. Harris et J. Starr, \emph{Families of rationally connected varieties}, J.
Amer. Math. Soc. {\bf 16} (2003), no. 1, 57--67.
\MR{1937199}

\bibitem{GHMS05} T. Graber, J. Harris, B. Mazur et J. Starr, \emph{Rational connectivity and sections of families over curves,} Ann. Sci. \'Ecole Norm. Sup. (4) {\bf 38} (2005), no. 5, 671--692.
\MR{2195256}

\bibitem {SGA2} A. Grothendieck, \emph{Cohomologie locale des faisceaux coh\'erents et th\'eor\`emes de Lefschetz locaux et globaux (SGA 2)}, 
Advanced Studies in Pure Mathematics, vol. 2,
North-Holland, Amsterdam, 1968; nouvelle \'edition: Documents Math\'ematiques (Paris), vol. 4, Soci\'et\'e math\'ematique de France, Paris, 2005.
\MR{2171939}

\bibitem{HRS1} J. Harris, M. Roth et J. Starr, \emph{Curves of small degree on cubic threefolds}, Rocky Mountain J. Math. {\bf 35} (2005) no. 3, 761--817.
\MR{2150309}

\bibitem{HX09} A. Hogadi et Ch. Xu, \emph{Degenerations of rationally connected varieties},
Trans. Amer. Math. Soc.
{\bf 361} (2009), no. 7, 3931--3949. 
\MR{2491906}

\bibitem{Hu86} K. Hulek, \emph{Projective geometry of elliptic curves}, Ast\'erisque No. 137 (1986).
\MR{0845383}

\bibitem{IM00} A. Iliev et D. Markushevich, \emph{The Abel-Jacobi map of a cubic threefold and periods of Fano threefolds of degree 14}, Doc. Math. {\bf 5} (2000), 23--47.
\MR{1739270}

\bibitem{Ka96} B. Kahn, \emph{Applications of weight-two cohomology}, Doc. Math. {\bf 1} (1996),
no. 17, 395--416.
\MR{1423901}

\bibitem{Kl05} S. Kleiman, \emph{The Picard scheme}. Dans: Fundamental algebraic geometry, pp. 235--321,
Math. Surveys Monogr., vol. 123, Amer. Math. Soc., Providence, RI, 2005.
\MR{2223410}

\bibitem{Liu} Q. Liu, \emph{Algebraic geometry and arithmetic curves}, Oxford Graduate Texts in Mathematics, vol. 6, Oxford Science Publications, Oxford University Press, Oxford, 2002.
\MR{1917232}

\bibitem{MT01} D. Markushevich et A. Tikhomirov, \emph{The Abel-Jacobi map of a moduli component of vector bundles on the cubic threefold}, J. Algebraic Geom. {\bf 10} (2001), no. 1, 37--62.
\MR{1795549}

\bibitem{Mb16} R. Mboro, \emph{On the universal $\mathrm{CH}_{0}$-group of cubic threefolds in positive characteristic},
manuscripta math. {\bf 154} (2017), no. 1-2, 147--168. 
\MR{3682208}
 
\bibitem{MS} A. S. Merkurjev et A. A. Suslin, \emph{$K$-cohomology of Severi-Brauer varieties and the
norm residue homomorphism}, Izv. Akad. Nauk SSSR Ser. Mat. {\bf 46} (1982), no. 5, 1011--
1046, 1135--1136.
\MR{0675529}
 

\bibitem{Mu72}  J. P. Murre, \emph{Algebraic equivalence modulo rational equivalence on a cubic threefold}, Compositio Math. {\bf 25} (1972), 161--206.
\MR{0352088}

\bibitem{Mu73}  J. P. Murre, \emph{Reduction of the proof of the non-rationality of a non-singular cubic threefold to a result of Mumford}, Compositio Math. {\bf 27} (1973), 63--82.
\MR{0352089}


\bibitem{Mu74} J. P. Murre, \emph{Some results on cubic threefolds}. Dans: Classification of algebraic varieties and compact
complex manifolds, pp. 140--160, Lecture Notes in Math., vol. 412, Springer, Berlin, 1974.
\MR{0374145}

\bibitem {Mu85} J. P. Murre, \emph{Applications of algebraic $K$-theory to the theory of algebraic cycles}. Dans: Algebraic geometry, Sitges
(Barcelona), 1983, pp. 216--261, Lecture Notes in Math., vol. 1124, Springer, Berlin, 1985.
\MR{0805336}

\bibitem{Vo02} C. Voisin, \emph{Th\'eorie de Hodge et g\'eom\'etrie alg\'ebrique complexe}, Cours Sp\'ecialis\'es, vol. 10, Soci\'et\'e Math\'ematique de France, Paris, 2002.
\MR{1988456}

\bibitem{Vo07} C. Voisin, \emph{Some aspects of the Hodge conjecture}, Jpn. J. Math. {\bf 2} (2007), no. 2,
261--296.
\MR{2342587}

\bibitem{Vo13} C. Voisin, \emph{Abel-Jacobi map, integral Hodge classes and decomposition of the diagonal}, J. Algebraic Geom. {\bf 22} (2013), no. 1, 141--174.
\MR{2993050}
\end{thebibliography}
\bibliographymark{References}
\def\cprime{$'$}

\end{document}